\newtheorem{Theorem}{Theorem}[section]
\newtheorem{cor}{Corollary}[section]
\newtheorem{lemma}{Lemma}
\title{Foliations of Tangent Bundle in a Finsler Manifold}
\author{H. Attarchi and M. M. Rezaii}
\begin{document}
\maketitle
\noindent
\begin{abstract}
In this paper, a frame is introduced on tangent bundle of a Finsler manifold in a manner that it makes some simplicity to study the properties of the natural foliations in tangent bundle. Moreover, we show that the indicatrix bundle of a Finsler manifold with lifted sasaki metric and natural almost complex structure on tangent bundle cannot be a sasakian manifold.
\vspace{.3cm}

{\bf Keywords:} Foliation, indicatrix bundle, Sasakian manifold.
\end{abstract}

\section{Introduction}

First time Sasaki~\cite{sasaki}, construct a natural Riemannian metric $G$ on the tangent bundle $TM$ of the Riemannian manifold $(M,g)$. Then $G$ was called the Sasaki metric on $TM$ and it was the main tool in studying interrelations between the geometries of $(M,g)$ and $(TM,G)$. Later, this idea was used to construct a Riemannian metric on tangent bundle of a Finsler manifold. The geometric objects that occur in Finsler geometry depend on both point and direction, therefore, the tangent bundle of a Finsler manifold plays a major role in the study of Finslerian objects. To emphasize this, there are several studies of interrelations between the geometry of foliations on the tangent bundle of a Finsler manifold and the geometry of the Finsler manifold itself~\cite{bejancu2008,bejancu}. Among the natural foliations of tangent bundle on a Finsler manifold, indicatrix bundle and Liouville vector fields play more important roles.

Let $(M,F)$ be a $n$-dimensional Finsler manifold and $TM$ its tangent bundle with sasaki lifted metric $G$. In this paper to study the geometry of some foliations of $TM$ which they were presented in~\cite{bejancu}, a new local frame of vector fields in $TTM$ is considered. There are some difficulties of working with natural foliations of tangent bundle of a Finsler manifold with respect to the natural basis $\partial/\partial y^i$ and $\delta/\delta x^j$. For example, there is no explicit decomposition of tangent vector fields of indicatrix bundle by vector fields $\partial/\partial y^i$ and $\delta/\delta x^j$. In Sections 3,4 and 5, there are some theorems and results that show the advantages of using the novel frame on tangent bundle of a Finsler manifold in some cases.

Aiming at our purpose, this paper is organized in the following way. In section 2, a short review of Finsler manifolds~\cite{bao-chern-shen,bejancu b1} is done and the notations which is needed in the followings are presented. In Section 3, a frame of local vector fields in $\Gamma TTM$ is introduced to study the indicatrix bundle and other natural foliations of tangent bundle of a Finsler manifold. Moreover, local components of Levi-Civita connection of sasaki metric $G$ on $TM$ are calculated in this basis. In~\cite{bejancu}, six natural foliations of tangent bundle of a Finsler manifold is introduced and some properties of them such as totally geodesic and bundle-like with respect to metric $G$ are studied. In Section 4, by using the new frame introduced in Section 3 some more theorem about these foliations are proved. Finally in Section 5, it is proved that the indicatrix bundle with its contact structure given in~\cite{bejancu2008} cannot be a Sasakian manifold~\cite{Blair}. Therefore, we should not have any expectation of properties of Sasakian manifolds in Riemannian geometry on indicatrix bundle as a Riemannian submanifold of $TM$ with restricted metric $\bar{G}$ of sasaki metric $G$.

\section{Preliminaries and Notations}
Let $(M,F)$ be an $n$-dimensional smooth Finsler manifold and $TM$ be its tangent bundle. If $(x^i)$ be the local coordinate on $M$ then the local coordinate on $TM$ is shown by $(x^i,y^i)$ where $(y^i)$ are the fibre coordinate. With respect to local coordinate system induced on $TM$, the natural local frame fields on $TM$ are given by $\frac{\partial}{\partial y^i}$ and $\frac{\partial}{\partial x^i}$. The vertical distribution $VTM$ is locally spanned by $\{\frac{\partial}{\partial y^1},\ldots,\frac{\partial}{\partial y^n}\}$. Considering the fundamental function $F$, then the horizontal distribution $HTM$ as a complementary distribution of $VTM$ is naturally defined as follows. The spray coefficients $G^i$ of fundamental function $F$ are given by:
$$G^i:=\frac{g^{ij}}{4}\left(\frac{\partial^2F^2}{\partial y^j\partial x^k}y^k-\frac{\partial F^2}{\partial x^j}\right)$$
where $(g^{ij})$ is the inverse matrix of Hessian matrix $F$ given as follows: $$g:=(g_{ij})=\left(\frac{1}{2}\frac{\partial^2F^2}{\partial y^i\partial y^j}\right)$$
Moreover, the nonlinear connection coefficients $G_i^j$ of $F$ are defined by:
$$G_i^j=\frac{\partial G^j}{\partial y^i}.$$
The horizontal distribution $HTM$ with respect to these nonlinear connections is given by:
$$HTM=<\frac{\delta}{\delta x^1},\ldots,\frac{\delta}{\delta x^n}>$$
where $\frac{\delta}{\delta x^i}=\frac{\partial}{\partial x^i}-G_i^j\frac{\partial}{\partial y^j}$. The lie brackets of these bases are given as follows:
$$[\frac{\delta}{\delta x^i},\frac{\delta}{\delta x^j}]=R_{\ ij}^k\frac{\partial}{\partial y^k},\ \ \ [\frac{\delta}{\delta x^i},\frac{\partial}{\partial y^j}]=G_{ij}^k\frac{\partial}{\partial y^k}$$
where $R_{\ ij}^k=\frac{\delta G_i^k}{\delta x^j}-\frac{\delta G_j^k}{\delta x^i}$ and $G_{ij}^k=\frac{\partial G_i^k}{\partial y^j}$. The dual local 1-forms of $\frac{\delta}{\delta x^i}$ and $\frac{\partial}{\partial y^i}$ are denoted by $dx^i$ and $\delta y^i$, respectively, where $\delta y^i:=dy^i+G_j^idx^j$. Then, the lifted sasaki metric $G$ on $TM$ in these local frames is given as follows:
$$G:=g_{ij}dx^i\otimes dx^j+g_{ij}\delta y^i\otimes\delta y^j$$
which it is a Riemannian metric on $TM$. The natural almost complex structure on $TTM$ which is compatible with metric $G$ is defined by:
\begin{equation}~\label{a.c.s.}
J:=\frac{\delta}{\delta x^i}\otimes\delta y^i-\frac{\partial}{\partial y^i}\otimes dx^i
\end{equation}
The \emph{indicatrix bundle} $I\!M$ of Finsler manifold $(M,F)$ is defined by:
$$I\!M:=\{(x,y)\in TM | F(x,y)=1\}$$
It is proved in~\cite{bejancu2008} that $I\!M$ with $(\varphi,\eta,\xi,\bar{G})$ is a contact metric manifold, where
\begin{equation}~\label{contact1}
\left\{
\begin{array}{l}
\eta:=y^ig_{ij}dx^j,\ \ \ \xi:=y^i\frac{\delta}{\delta x^i}\\ \cr
\varphi:=J|_D, \ \ \ \varphi(\xi):=0\\ \cr
D:=\{X\in TTM | \eta(X)=\eta(JX)=0\}
\end{array}
\right.
\end{equation}
and $\bar{G}$ is restriction of sasaki metric $G$ to indicatrix bundle. Distribution $D$ defined in~(\ref{contact1}) is called \emph{contact distribution} of indicatrix bundle.

In addition, throughout the paper the Einstein convention, that is, repeated indices with one upper index and one lower index denote summation over their range is used. The indices $i,j,k,...$ are used for range $1,\ldots,n$ if not stated otherwise.

\section{A Frame on Indicatrix Bundle of a Finsler Manifold}
Supposed $(M,F)$ be an $n$-dimensional Finsler manifold. In Riemannian manifold $(TM,G)$, the orthogonal distribution to vertical Liouville vector field $L=y^i\frac{\partial}{\partial y^i}$ in vertical distribution $VTM$ is denoted by $V'TM$. By definition of $V'TM$, it is easy to see that $V'TM$ is a foliation in $TTM$. Therefore, there is the local chart $(U,\varphi)$ on $TM$ such that
$$TTM|_U=<\frac{\bar{\partial}}{\bar{\partial}y^1},...,\frac{\bar{\partial}}{\bar{\partial}y^{n-1}},e_1,...,e_{n+1}>$$
where
\begin{equation}~\label{vtm}
V'TM|_U=<\frac{\bar{\partial}}{\bar{\partial}y^1},...,\frac{\bar{\partial}}{\bar{\partial}y^{n-1}}>
\end{equation}
It is obvious that $\frac{\bar{\partial}}{\bar{\partial}y^a}$ for all $a=1,...,n-1$ are vector fields in $VTM$ and they can be written as a linear combination of the natural basis $\{\frac{\partial}{\partial y^1},...,\frac{\partial}{\partial y^n}\}$ of $VTM$ as follows:
$$\frac{\bar{\partial}}{\bar{\partial}y^a}=E_a^i\frac{\partial}{\partial y^i}\ \ \ \forall a=1,...,n-1$$
where $E_a^i$ is the $(n-1)\times n$ matrix of maximum rank. The first property of this matrix is $E_a^ig_{ij}y^j=0$
achieved by the feature $G(\frac{\bar{\partial}}{\bar{\partial}y^a},L)=0$. Moreover, on $U$ the vertical distribution $VTM$ is locally spanned by:
$$\{\frac{\bar{\partial}}{\bar{\partial}y^1},...,\frac{\bar{\partial}}{\bar{\partial}y^{n-1}},L\}$$
Then, by using the natural almost complex structure $J$ presented in~(\ref{a.c.s.}), the new local basis in $TTM$ is introduced as follows:
\begin{equation}~\label{new bas}
TTM|_U=<\frac{\bar{\delta}}{\bar{\delta}x^a},\ \xi,\ \frac{\bar{\partial}}{\bar{\partial}y^a},\ L>
\end{equation}
where $\frac{\bar{\delta}}{\bar{\delta}x^a}:=J\frac{\bar{\partial}}{\bar{\partial}y^a}$. By considering the vector frame~(\ref{new bas}) on $TTM$ it is possible to locally decomposed every sections of contact distribution $D$ or every tangent vector field of $IM$ to basic vector fields $\frac{\bar{\delta}}{\bar{\delta}x^a}$, $\frac{\bar{\partial}}{\bar{\partial}y^a}$ and $\xi$. Equivalently, it means that:
$$TIM|_U=<\frac{\bar{\delta}}{\bar{\delta}x^a},\xi,\frac{\bar{\partial}}{\bar{\partial}y^a}> \ ,\ \ \ \ D|_U=<\frac{\bar{\delta}}{\bar{\delta}x^a},\frac{\bar{\partial}}{\bar{\partial}y^a}>.$$
The Sasakian metric $G$ on $TM$ can be shown in the new frame field~(\ref{new bas}) as follows:
\begin{equation}~\label{met.mat.}
G:=\left(%
\begin{array}{cccc}
  g_{ab} & 0 & 0 & 0\\
  0 & F^2 & 0 & 0\\
  0 & 0 & g_{ab} & 0\\
  0 & 0 & 0 & F^2\\
\end{array}%
\right)
\end{equation}
where $a,b\in\{1,...,n-1\}$ and $g_{ab}=G(\frac{\bar{\delta}}{\bar{\delta}x^a},\frac{\bar{\delta}}{\bar{\delta}x^b})
=G(\frac{\bar{\partial}}{\bar{\partial} y^a},\frac{\bar{\partial}}{\bar{\partial}y^b})=g_{ij}E_a^iE_b^j$. Furthermore, Lie brackets of vector fields~(\ref{new bas}) are presented as follows:
\begin{equation}~\label{lie bracket}
\left\{
\begin{array}{l}
(1) \ [\frac{\bar{\delta}}{\bar{\delta}x^a},\frac{\bar{\delta}}{\bar{\delta}x^b}]=(\frac{\bar{\delta}E_b^i}{\bar{\delta}x^a}
-\frac{\bar{\delta}E_a^i}{\bar{\delta}x^b})\frac{\delta}{\delta x^i}+E_a^iE_b^jR_{ij}^k\frac{\partial}{\partial y^k},\\
(2) \ [\frac{\bar{\delta}}{\bar{\delta}x^a},\frac{\bar{\partial}}{\bar{\partial}y^b}]=(\frac{\bar{\delta}E_b^k}{\bar{\delta}x^a}
+E_a^iE_b^jG_{ij}^k)\frac{\partial}{\partial y^k}-\frac{\bar{\partial}E_a^i}{\bar{\partial}y^b}\frac{\delta}{\delta x^i},\\
(3) \
[\frac{\bar{\partial}}{\bar{\partial}y^a},\frac{\bar{\partial}}{\bar{\partial}y^b}]=(
\frac{\bar{\partial}E_b^i}{\bar{\partial}y^a}-\frac{\bar{\partial}E_a^i}{\bar{\partial}y^b})\frac{\partial}{\partial y^i},\\
(4) \
[\frac{\bar{\delta}}{\bar{\delta}x^a},\xi]=-(E_a^iG_i^j+\xi(E_a^j))\frac{\delta}{\delta x^j}+E_a^iy^jR_{ij}^k
\frac{\partial}{\partial y^k},\\
(5) \
[\frac{\bar{\partial}}{\bar{\partial}y^a},\xi]=\frac{\bar{\delta}}{\bar{\delta}x^a}-(\xi(E_a^j)+E_a^iG_i^j)
\frac{\partial}{\partial y^j},\\
(6) \
[\frac{\bar{\delta}}{\bar{\delta}x^a},L]=-L(E_a^i)\frac{\delta}{\delta x^i},\\
(7) \
[\frac{\bar{\partial}}{\bar{\partial}y^a},L]=\frac{\bar{\partial}}{\bar{\partial}y^a}-L(E_a^i)
\frac{\partial}{\partial y^i},\\
(8) \
[\xi,\xi]=[L,L]=[\xi,L]+\xi=0.
\end{array}
\right.
\end{equation}
Consider local vector fields $\frac{\bar{\delta}}{\bar{\delta}x^a}$, $\frac{\bar{\partial}}{\bar{\partial}y^a}$ and $\xi$ and sasaki metric $G$~(\ref{met.mat.}) then the local components of Levi-Civita connection $\nabla$ given by:
\begin{equation}~\label{levi-civita}
\left\{\begin{array}{l}
2G(\nabla_XY,Z)=XG(Y,Z)+YG(X,Z)-ZG(X,Y)\cr
-G([X,Z],Y)-G([Y,Z],X)+G([X,Y],Z)
\end{array}
\right.
\end{equation}
are as follows:
\begin{equation}~\label{levicivita1}
\left\{
\begin{array}{l}
\nabla_{\frac{\bar{\delta}}{\bar{\delta}x^a}}\frac{\bar{\delta}}{\bar{\delta}x^b}=(\Gamma_{ab}^e+
\frac{\bar{\delta}E_b^j}{\bar{\delta}x^a}E_d^kg_{jk}g^{de})
{\frac{\bar{\delta}}{\bar{\delta}x^e}}+(-g_{ab}^e+\frac{1}{2}R_{ab}^e)\frac{\bar{\partial}}{\bar{\partial}y^e}
+\frac{1}{2F^2}\bar{R}_{ab}\xi,\\
\nabla_{\frac{\bar{\delta}}{\bar{\delta}x^a}}\frac{\bar{\partial}}{\bar{\partial}y^b}
=\left(\frac{1}{2}E_b^jE_d^kE_a^i(\frac{\delta g_{jk}}{\delta x^i}-G_{ik}^hg_{hj}+G_{ij}^hg_{hk})+\frac{\bar{\delta}E_b^j}{\bar{\delta}x^a}
E_d^kg_{jk}\right)g^{de}\frac{\bar{\partial}}{\bar{\partial}y^e}\cr +(g_{ab}^e-\frac{1}{2}R_{bad}g^{de})\frac{\bar{\delta}}{\bar{\delta}x^e}
+\frac{1}{2F^2}R_{ab}\xi,\\
\nabla_{\frac{\bar{\partial}}{\bar{\partial}y^b}}\frac{\bar{\delta}}{\bar{\delta}x^a}=(g_{ab}^e-
\frac{1}{2}R_{bad}g^{de}+\frac{\bar{\partial}E_a^i}{\bar{\partial}y^b}E_d^kg_{ik}g^{de}
)\frac{\bar{\delta}}{\bar{\delta}x^e}+\frac{1}{F^2}(\frac{1}{2}R_{ab}-g_{ab})\xi\cr
+\frac{1}{2}E_a^iE_b^jE_d^k(\frac{\delta g_{jk}}{\delta x^i}-G_{ik}^hg_{hj}
-G_{ij}^hg_{hk})g^{de}\frac{\bar{\partial}}{\bar{\partial}y^e},\\
\nabla_{\frac{\bar{\partial}}{\bar{\partial}y^a}}\frac{\bar{\partial}}{\bar{\partial}y^b}=\frac{1}{2}E_a^iE_b^jE_d^k
(G_{ik}^hg_{hj}+G_{jk}^hg_{hi}-\frac{\delta g_{ij}}{\delta x^k})g^{de}\frac{\bar{\delta}}{\bar{\delta}x^e}\cr +(g_{ab}^e+\frac{\bar{\partial}E_b^j}{\bar{\partial}y^a}E_d^kg_{kj}g^{de})\frac{\bar{\partial}}{\bar{\partial}y^e}
-\frac{1}{F^2}g_{ab}L,\\
\nabla_{\frac{\bar{\delta}}{\bar{\delta}x^a}}\xi=\frac{1}{2}\bar{R}_{da}g^{de}
\frac{\bar{\delta}}{\bar{\delta}x^e}-\frac{1}{2}R_{ad}g^{de}\frac{\bar{\partial}}{\bar{\partial}y^e},\\
\nabla_{\xi}\frac{\bar{\delta}}{\bar{\delta}x^a}=(\xi(E_a^i)E_d^kg_{ik}+E_a^iG_i^hg_{hk}E_d^k+\frac{1}{2}\bar{R}_{da})
g^{de}\frac{\bar{\delta}}{\bar{\delta}x^e}+\frac{1}{2}R_{ad}g^{de}\frac{\bar{\partial}}{\bar{\partial}y^e},\\
\nabla_{\frac{\bar{\partial}}{\bar{\partial}y^a}}\xi=(\delta_a^e-\frac{1}{2}R_{ad}g^{de})
\frac{\bar{\delta}}{\bar{\delta}x^e},\\
\nabla_{\xi}\frac{\bar{\partial}}{\bar{\partial}y^a}=-\frac{1}{2}R_{ad}g^{de}\frac{\bar{\delta}}{\bar{\delta}x^e}
+(\xi(E_a^i)g_{ik}+E_a^iG_i^jg_{jk})E_d^kg^{de}\frac{\bar{\partial}}{\bar{\partial}y^e},\\
\nabla_{\frac{\bar{\delta}}{\bar{\delta}x^a}}L=\nabla_L\frac{\bar{\delta}}{\bar{\delta}x^a}-L(E_a^i)E_d^kg_{ik}g^{de}
\frac{\bar{\delta}}{\bar{\delta}x^e}=0,\\
\nabla_{\frac{\bar{\partial}}{\bar{\partial}y^a}}L
-\frac{\bar{\partial}}{\bar{\partial}y^a}=\nabla_L\frac{\bar{\partial}}{\bar{\partial}y^a}-L(E_a^i)E_d^kg_{ik}g^{de}
\frac{\bar{\partial}}{\bar{\partial}y^e}=0,\\
\nabla_{\xi}\xi=\nabla_{\xi}L=\nabla_L\xi-\xi=\nabla_LL-L=0.
\end{array}
\right.
\end{equation}
where
\begin{equation}~\label{symb.}
g_{ab}^c=g_{abd}g^{dc}=\frac{1}{2}E_a^iE_b^jE_d^kg_{ijk}g^{dc}\ ,\ \ \ \ \ \ \Gamma_{ab}^c=E_a^iE_b^jE_d^k\Gamma_{ij}^hg_{hk}g^{dc}
\end{equation}
\centerline{$R_{ab}^c=R_{dab}g^{dc}=E_a^iE_b^jE_d^kR_{ij}^hg_{hk}g^{dc},$}
\centerline{$\bar{R}_{ab}=(\frac{\bar{\delta}E_b^i}{\bar{\delta}x^a}
-\frac{\bar{\delta}E_a^i}{\bar{\delta}x^b})g_{ij}y^j\ ,\ \ \ \ \ \ R_{ab}=E_a^iE_b^jR_{ij}$}
\noindent and, $(g^{ab})$ is the inverse matrix of $(g_{ab})$.

Suppose that the Levi-Civita connection and metric on indicatrix bundle are denoted by $\bar{\nabla}$ and $\bar{G}$, respectively, which $\bar{G}$ is the restriction of metric $G$. To compute the components of Levi-Civita connection $\bar{\nabla}$ on indicatrix bundle $I\!M$ from~(\ref{levicivita1}), the \emph{Guass Formula}~\cite{lee}:
\begin{equation}~\label{guass}
\nabla_XY=\bar{\nabla}_XY+H(X,Y)
\end{equation}
where $H$ is the \emph{second fundamental form} of $I\!M$ in $TM$ is needed. From~(\ref{levicivita1}), it is obvious that $\nabla$ is tangent to $I\!M$ for all combinations of $\frac{\bar{\delta}}{\bar{\delta}x^a},\ \xi$ and $\frac{\bar{\partial}}{\bar{\partial}y^a}$ except $\nabla_{\frac{\bar{\partial}}{\bar{\partial}y^a}}\frac{\bar{\partial}}{\bar{\partial}y^b}$. Therefore, $\bar{\nabla}$ is equal to $\nabla$ for the other combinations of $\frac{\bar{\delta}}{\bar{\delta}x^a},\ \xi$ and $\frac{\bar{\partial}}{\bar{\partial}y^a}$ by using the Gauss formula~(\ref{guass}).

The curvature tensor $R$ of $\nabla$ defined by $R(X,Y)Z=\nabla_X\nabla_YZ-\nabla_Y\nabla_XZ-\nabla_{[X,Y]}Z$
is related to the curvature tensor $\bar{R}$ of $\bar{\nabla}$ as follows:
\begin{equation}~\label{cur}
\left\{
\begin{array}{l}
R(\frac{\bar{\delta}}{\bar{\delta}x^a},\frac{\bar{\delta}}{\bar{\delta}x^b})\frac{\bar{\partial}}{\bar{\partial}y^c}
=\bar{R}(\frac{\bar{\delta}}{\bar{\delta}x^a},\frac{\bar{\delta}}{\bar{\delta}x^b})
\frac{\bar{\partial}}{\bar{\partial}y^c}+\frac{1}{F^2}R_{cab}L\\
R(\frac{\bar{\delta}}{\bar{\delta}x^a},\frac{\bar{\partial}}{\bar{\partial}y^b})\frac{\bar{\delta}}{\bar{\delta}x^c}
=\bar{R}(\frac{\bar{\delta}}{\bar{\delta}x^a},\frac{\bar{\partial}}{\bar{\partial}y^b})
\frac{\bar{\delta}}{\bar{\delta}x^c}+\frac{1}{2F^2}(R_{bac}-2g_{abc})L\\
R(\frac{\bar{\partial}}{\bar{\partial}y^a},\frac{\bar{\partial}}{\bar{\partial}y^b})
\frac{\bar{\partial}}{\bar{\partial}y^c}
=\bar{R}(\frac{\bar{\partial}}{\bar{\partial}y^a},\frac{\bar{\partial}}{\bar{\partial}y^b})
\frac{\bar{\partial}}{\bar{\partial}y^c}-\frac{1}{F^2}g_{bc}\frac{\bar{\partial}}{\bar{\partial}y^a}+
\frac{1}{F^2}g_{ac}\frac{\bar{\partial}}{\bar{\partial}y^b}\\
R(\frac{\bar{\delta}}{\bar{\delta}x^a},\frac{\bar{\partial}}{\bar{\partial}y^b})
\frac{\bar{\partial}}{\bar{\partial}y^c}
=\bar{R}(\frac{\bar{\delta}}{\bar{\delta}x^a},\frac{\bar{\partial}}{\bar{\partial}y^b})
\frac{\bar{\partial}}{\bar{\partial}y^c}+\frac{1}{2}E_c^iE_b^jE_a^k
(G_{ik}^hg_{hj}+G_{jk}^hg_{hi}-\frac{\delta g_{ij}}{\delta x^k})L\\
R(\frac{\bar{\delta}}{\bar{\delta}x^a},\frac{\bar{\partial}}{\bar{\partial}y^b})\xi=
\bar{R}(\frac{\bar{\delta}}{\bar{\delta}x^a},\frac{\bar{\partial}}{\bar{\partial}y^b})\xi-\frac{1}{2F^2}R_{ab}L\\
R(\frac{\bar{\partial}}{\bar{\partial}y^a},\xi)\frac{\bar{\delta}}{\bar{\delta}x^b}=
\bar{R}(\frac{\bar{\partial}}{\bar{\partial}y^a},\xi)\frac{\bar{\delta}}{\bar{\delta}x^b}-\frac{1}{2F^2}R_{ab}L\\
R(\frac{\bar{\delta}}{\bar{\delta}x^a},\xi)\frac{\bar{\partial}}{\bar{\partial}y^b}=
\bar{R}(\frac{\bar{\delta}}{\bar{\delta}x^a},\xi)\frac{\bar{\partial}}{\bar{\partial}y^b}-\frac{1}{F^2}R_{ab}L
\end{array}
\right.
\end{equation}
For the other combinations of $\frac{\bar{\delta}}{\bar{\delta}x^a},\frac{\bar{\partial}}{\bar{\partial}y^a}$ and $\xi$, tensor fields $R$ and $\bar{R}$ are equal.

\section{Foliations on $(TM,G)$}
In this section, the local frame~(\ref{new bas}) on $TM$ is used to study some properties of natural foliations on tangent bundle of a Finsler manifold. In~\cite{bejancu}, a comprehensive study was done on six foliations of $TM$ presented as follows:
\begin{enumerate}
\item{$L$: vertical Liouville vector field.}
\item{$\xi$: horizontal Liouville vector field.}
\item{$L\oplus\xi$}.
\item{$VTM$: defined by $VTM=<\frac{\partial}{\partial y^1},\ldots,\frac{\partial}{\partial y^n}>$.}
\item{$V'TM$: defined in~(\ref{vtm}).}
\item{$V^{\perp}TM$: which is perpendicular to $L$ in $TTM$ with respect to the metric $G$.}
\end{enumerate}
A. Bejancu, in~\cite{bejancu}, studied the properties of these foliations such as totally geodesic and bundle-like
for sasaki metric $G$. Here, some more theorems are proved of these foliations about these properties by help of the frame which it was introduced in Section 3.
\begin{cor}~\label{bun lik 1}
The sasaki lifted metric $G$ is bundle-like for foliation $V^{\perp}TM$.
\end{cor}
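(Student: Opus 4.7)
The plan is to apply the standard characterization that $G$ is bundle-like for a foliation $\mathcal{F}$ if and only if
\[
G(\nabla_X V,Y)+G(X,\nabla_Y V)=0
\]
for every $V\in\Gamma(\mathcal{F})$ and all $X,Y\in\Gamma(\mathcal{F}^{\perp})$. Reading off the metric matrix~(\ref{met.mat.}) in the frame~(\ref{new bas}), the vector field $L$ is $G$-orthogonal to every generator of $V^{\perp}TM$, so the $G$-orthogonal complement of the foliation is the rank one line bundle $\langle L\rangle$. Consequently the bundle-like identity collapses to the single requirement
\[
G(\nabla_{L}V,L)=0\qquad\forall\,V\in\Gamma(V^{\perp}TM).
\]

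By tensoriality in $V$ it suffices to verify this on each of the three families of generators $\frac{\bar{\delta}}{\bar{\delta}x^{a}}$, $\frac{\bar{\partial}}{\bar{\partial}y^{a}}$, and $\xi$. All three of the needed covariant derivatives along $L$ are already contained in the last rows of~(\ref{levicivita1}): solving the displayed relations for $\nabla_L$ yields $\nabla_{L}\frac{\bar{\delta}}{\bar{\delta}x^{a}}=L(E_a^{i})E_d^{k}g_{ik}g^{de}\frac{\bar{\delta}}{\bar{\delta}x^{e}}$, $\nabla_{L}\frac{\bar{\partial}}{\bar{\partial}y^{a}}=L(E_a^{i})E_d^{k}g_{ik}g^{de}\frac{\bar{\partial}}{\bar{\partial}y^{e}}$, and $\nabla_{L}\xi=\xi$. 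In every case the result lies inside $V^{\perp}TM$, so its $G$-pairing with $L$ vanishes by~(\ref{met.mat.}); combining the three vanishings gives the displayed identity and hence the corollary.

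There is essentially no obstacle here: the introduction of the frame~(\ref{new bas}) and the derivation of the explicit Levi-Civita components~(\ref{levicivita1}) have done all of the work in advance. The content of the corollary is the observation that, in this adapted frame, the bundle-like condition for $V^{\perp}TM$ is equivalent to $\nabla_{L}$ preserving each of the three families $\langle\frac{\bar{\delta}}{\bar{\delta}x^{a}}\rangle$, $\langle\frac{\bar{\partial}}{\bar{\partial}y^{a}}\rangle$, and $\langle\xi\rangle$ separately, which is visible by inspection of~(\ref{levicivita1}).
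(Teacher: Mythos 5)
Your proof is correct and follows essentially the same route as the paper: both identify the $G$-orthogonal complement of $V^{\perp}TM$ as the line bundle $\langle L\rangle$, invoke the standard bundle-like criterion, and read the answer off~(\ref{levicivita1}). The paper phrases the single remaining condition as $G(2\nabla_{L}L,X)=0$ for $X$ tangent to the foliation (using $\nabla_{L}L=L$), while you phrase it dually as $G(\nabla_{L}V,L)=0$ for the generators $V$; by metric compatibility these are the same observation.
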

\begin{proof}
By help of~(\ref{levicivita1}), it is obtained that:
$$G(\nabla_LL+\nabla_LL,X)=0 \ \ \ \ \forall X\in \Gamma(V^{\perp}TM)$$
and this completes the proof.
\end{proof}
\begin{Theorem}~\label{bun lik 2}
The metric $G$ is bundle-like for foliation $V'TM$ if and only if $(M,g)$ is a Riemannian manifold.
\end{Theorem}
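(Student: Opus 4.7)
The plan is to use the infinitesimal characterization of bundle-likeness: $G$ is bundle-like for $V'TM$ if and only if $G(\nabla_{Y}V,Z)+G(\nabla_{Z}V,Y)=0$ for every $V\in\Gamma(V'TM)$ and every $Y,Z\in\Gamma((V'TM)^{\perp})$. Reading~(\ref{met.mat.}), the $G$-orthogonal complement of $V'TM$ is locally spanned by $\{\bar{\delta}/\bar{\delta}x^{b},\xi,L\}$, so by bilinearity and the symmetry of the condition in $Y,Z$ it suffices to take $V=\bar{\partial}/\bar{\partial}y^{a}$ and let $(Y,Z)$ range over the six unordered pairs built from this frame.

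First I would dispose of the automatic cases. From~(\ref{levicivita1}) one reads that $\nabla_{L}\bar{\partial}/\bar{\partial}y^{a}$ lies in $V'TM$, that $\nabla_{\bar{\delta}/\bar{\delta}x^{b}}\bar{\partial}/\bar{\partial}y^{a}$ and $\nabla_{\xi}\bar{\partial}/\bar{\partial}y^{a}$ carry no $L$-component, and that $\nabla_{\xi}\bar{\partial}/\bar{\partial}y^{a}$ carries no $\xi$-component, so every pair involving $L$ as well as $(\xi,\xi)$ contributes $0$. The pair $(\bar{\delta}/\bar{\delta}x^{b},\xi)$ evaluates to $\tfrac{1}{2}(R_{ba}-R_{ab})$ once one reads the $\xi$-coefficient of~(\ref{levicivita1})(2) and the $\bar{\delta}/\bar{\delta}x^{e}$-coefficient of~(\ref{levicivita1})(8); this vanishes because the underlying Finsler curvature tensor $R_{ij}$ is symmetric, hence so is $R_{ab}=E_{a}^{i}E_{b}^{j}R_{ij}$.

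The decisive case is $(Y,Z)=(\bar{\delta}/\bar{\delta}x^{b},\bar{\delta}/\bar{\delta}x^{c})$. Relabelling $a\leftrightarrow b$ in~(\ref{levicivita1})(2) and pairing the $\bar{\delta}/\bar{\delta}x^{e}$-term with $\bar{\delta}/\bar{\delta}x^{c}$ gives
\[
G(\nabla_{\bar{\delta}/\bar{\delta}x^{b}}\bar{\partial}/\bar{\partial}y^{a},\bar{\delta}/\bar{\delta}x^{c})=g_{bac}-\tfrac{1}{2}R_{abc}.
\]
Symmetrizing in $b,c$, the total symmetry of the Cartan symbol $g_{abc}$ (inherited from $g_{ijk}=\partial g_{ij}/\partial y^{k}$) and the antisymmetry of $R_{abc}$ in its last two indices (inherited from $R_{\ ij}^{h}=-R_{\ ji}^{h}$) combine to collapse the sum to $2g_{abc}$. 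The bundle-like condition is therefore equivalent to $g_{abc}=0$ for all $a,b,c\in\{1,\dots,n-1\}$.

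The remaining step is to promote this partial vanishing to $g_{ijk}\equiv 0$. Since $g_{ij}$ is $0$-homogeneous in $y$ one has $g_{ijk}y^{k}=0$, and since $\{E_{a}^{i}\}_{a=1}^{n-1}$ spans the $g$-orthogonal complement of $y^{i}$ in $T_{x}M$, every tangent vector decomposes as $\alpha y+\beta^{a}E_{a}$; substituting such a decomposition into each of the three slots of the totally symmetric tensor $g_{ijk}$, every resulting contraction is either a multiple of $g_{abc}$ or is killed by $g_{ijk}y^{k}=0$. Hence $g_{abc}=0$ is equivalent to $g_{ijk}\equiv 0$, which is equivalent to $(M,g)$ being Riemannian. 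The main obstacle in carrying out the plan is the precise index bookkeeping needed to see both curvature contributions (in the $(\bar{\delta}/\bar{\delta}x^{b},\xi)$ and the $(\bar{\delta}/\bar{\delta}x^{b},\bar{\delta}/\bar{\delta}x^{c})$ cases) cancel cleanly, which in turn rests on the standard symmetry of $R_{ij}$ and the antisymmetry of $R_{\ ij}^{h}$ inherited through the $E$-contractions.
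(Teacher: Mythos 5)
Your proposal is correct and follows essentially the same route as the paper: both verify the bundle-like condition case by case from the Levi-Civita components in the frame~(\ref{new bas}) (you use the equivalent form $G(\nabla_YV,Z)+G(\nabla_ZV,Y)=0$ where the paper uses $G(\nabla_YZ+\nabla_ZY,V)=0$), isolate the single surviving term $\pm 2g_{abc}$ from the horizontal--horizontal pair, and conclude $g_{abc}=0$ iff $(M,g)$ is Riemannian. Your final paragraph usefully spells out the step the paper leaves implicit, namely that $g_{abc}=0$ forces $g_{ijk}\equiv 0$ via $g_{ijk}y^k=0$ and the fact that $\{y,E_a\}$ spans each tangent space.
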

\begin{proof}
From~(\ref{levicivita1}), it is a straightforward calculation to obtain:
$$G(\nabla_XY+\nabla_YX,\frac{\bar{\partial}}{\bar{\partial}y^c})=0\ \ \ \forall c\in\{1,\ldots,n-1\}, X,Y\in\Gamma(HTM\oplus L)$$
except $G(\nabla_{\frac{\bar{\delta}}{\bar{\delta}x^a}}\frac{\bar{\delta}}{\bar{\delta}x^b}
+\nabla_{\frac{\bar{\delta}}{\bar{\delta}x^b}}\frac{\bar{\delta}}{\bar{\delta}x^a},
\frac{\bar{\partial}}{\bar{\partial}y^b})$ which is equal to $-2g_{abc}$. Therefore, $G$ is bundle-like for foliation $V'TM$ if and only if $g_{abc}=0$, and it leads to $g_{ijk}=0$ by definition $g_{abc}$ in~(\ref{symb.}). This completes the proof.
\end{proof}
\begin{Theorem}~\label{tot1}
The foliations $V'TM$ and $V^{\perp}TM$ are not totally geodesic with respect to the Levi-Civita connection of Riemannian metric $G$ on $TM$.
\end{Theorem}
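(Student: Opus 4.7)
The plan is to exhibit, in both cases, a pair of vector fields tangent to the foliation whose covariant derivative has a component transverse to the foliation, using the connection formulas (\ref{levicivita1}) expressed in the frame (\ref{new bas}). Recall that a foliation $\mathcal{F}$ is totally geodesic with respect to $\nabla$ precisely when $\nabla_X Y \in \Gamma(\mathcal{F})$ for every $X, Y \in \Gamma(\mathcal{F})$, so it suffices to produce a single transverse component for one such pair.

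The crucial observation is that the formula
\[
\nabla_{\frac{\bar{\partial}}{\bar{\partial}y^a}}\frac{\bar{\partial}}{\bar{\partial}y^b}
=\tfrac{1}{2}E_a^iE_b^jE_d^k\!\left(G_{ik}^hg_{hj}+G_{jk}^hg_{hi}-\tfrac{\delta g_{ij}}{\delta x^k}\right)\!g^{de}\frac{\bar{\delta}}{\bar{\delta}x^e}
+\Big(g_{ab}^e+\tfrac{\bar{\partial}E_b^j}{\bar{\partial}y^a}E_d^kg_{kj}g^{de}\Big)\frac{\bar{\partial}}{\bar{\partial}y^e}
-\tfrac{1}{F^2}g_{ab}L
\]
from (\ref{levicivita1}) contains the $L$-component $-F^{-2}g_{ab}L$. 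Since $(g_{ab})$ is the restriction of the Riemannian metric $\bar{G}$ to the subbundle spanned by $\{\bar{\partial}/\bar{\partial}y^a\}_{a=1}^{n-1}$, it is positive definite; in particular $g_{aa}>0$ for each $a$, so this $L$-coefficient never vanishes identically.

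For $V^{\perp}TM$, the frame (\ref{new bas}) decomposes $TTM|_U$ as $\langle \bar{\delta}/\bar{\delta}x^a,\xi,\bar{\partial}/\bar{\partial}y^a\rangle \oplus \langle L\rangle$, and exactly the first summand is $V^{\perp}TM$. Taking $X=Y=\bar{\partial}/\bar{\partial}y^a\in \Gamma(V^{\perp}TM)$ and reading off the displayed formula, the $L$-component $-F^{-2}g_{aa}L$ is nonzero, so $\nabla_X X \notin \Gamma(V^{\perp}TM)$. For $V'TM=\langle\bar{\partial}/\bar{\partial}y^a\rangle$, the complement in $TTM$ contains $L$ (among other things), and the very same computation shows that the $L$-component of $\nabla_{\bar{\partial}/\bar{\partial}y^a}\bar{\partial}/\bar{\partial}y^a$ is nonzero; alternatively one can use that any horizontal component already obstructs tangency to $V'TM$. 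In either case, the transverse component is detected by the $L$-coefficient $-F^{-2}g_{ab}$.

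There is essentially no obstacle in this argument beyond checking that the relevant coefficients in (\ref{levicivita1}) genuinely do not vanish; the positive-definiteness of $(g_{ab})$, coming from the positive-definiteness of $\bar{G}$ restricted to $V'TM$, handles this uniformly. The proof therefore reduces to quoting the two relevant lines from (\ref{levicivita1}) and (\ref{met.mat.}) and observing that $g_{aa}\neq 0$.
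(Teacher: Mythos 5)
Your proposal is correct and follows essentially the same route as the paper: both arguments read off the $L$-component $-\frac{1}{F^2}g_{ab}L$ of $\nabla_{\frac{\bar{\partial}}{\bar{\partial}y^a}}\frac{\bar{\partial}}{\bar{\partial}y^b}$ from~(\ref{levicivita1}) (the paper phrases this as the second fundamental form $H(\frac{\bar{\partial}}{\bar{\partial}y^a},\frac{\bar{\partial}}{\bar{\partial}y^b})$ being nonzero) and use the positive definiteness of $(g_{ab})$ to conclude that this transverse component cannot vanish for either foliation.
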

\begin{proof}
From~(\ref{levicivita1}), it is obtained that:
$H(\frac{\bar{\partial}}{\bar{\partial}y^a},\frac{\bar{\partial}}{\bar{\partial}y^b})=-\frac{1}{F^2}g_{ab}L$
for both foliations $V'TM$ and $V^{\perp}TM$, which it cannot be vanish. This completes the proof.
\end{proof}
\begin{cor}~\label{bun lik 3}
The metric $G$ for foliations $L$ and $L\oplus\xi$ cannot be bundle-like.
\end{cor}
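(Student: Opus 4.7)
The plan is to recall that a Riemannian metric is bundle-like for a foliation $\mathcal{F}$ precisely when, for any sections $X,Y$ of the orthogonal (transverse) distribution $\mathcal{F}^\perp$ and any section $V$ of $\mathcal{F}$, one has $G(\nabla_X Y+\nabla_Y X,V)=0$. So for each of the two foliations I just need to identify the transverse distribution and exhibit one pair $(X,Y,V)$ that violates this identity; this violation is already hidden in the connection table~(\ref{levicivita1}).

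For the foliation spanned by $L$, the orthogonal distribution is $V^{\perp}TM$, which in the frame~(\ref{new bas}) is spanned by the $\bar{\delta}/\bar{\delta}x^a$, $\xi$, and $\bar{\partial}/\bar{\partial}y^a$. First I would pick $X=Y=\bar{\partial}/\bar{\partial}y^a$, which lies in $V'TM\subset V^{\perp}TM$. Reading off the formula for $\nabla_{\bar{\partial}/\bar{\partial}y^a}\bar{\partial}/\bar{\partial}y^b$ from~(\ref{levicivita1}), its $L$-component is $-\frac{1}{F^2}g_{ab}L$. Using the metric matrix~(\ref{met.mat.}), I would then compute
\begin{equation*}
G(\nabla_X X+\nabla_X X,L)=2\cdot\Bigl(-\tfrac{1}{F^2}g_{aa}\Bigr)\cdot F^2=-2g_{aa},
\end{equation*}
which is strictly negative because $(g_{ab})$ is positive-definite. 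This single equation already kills the bundle-like condition for $L$.

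For the foliation $L\oplus\xi$, I would first observe that its orthogonal complement in $(TTM,G)$ is the distribution spanned by the $\bar{\delta}/\bar{\delta}x^a$ and the $\bar{\partial}/\bar{\partial}y^a$, since $L$ and $\xi$ are $G$-orthogonal by~(\ref{met.mat.}) and the remaining basis vectors of~(\ref{new bas}) are orthogonal to both. Choosing again $X=Y=\bar{\partial}/\bar{\partial}y^a$ (now transverse to $L\oplus\xi$) and $V=L\in L\oplus\xi$, the very same computation yields $G(\nabla_X X+\nabla_X X,L)=-2g_{aa}\neq 0$, contradicting the bundle-like condition. This finishes both cases in one stroke.

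There is no real obstacle here: the whole point is that the table~(\ref{levicivita1}) already isolates the mean-curvature-like term $-\frac{1}{F^2}g_{ab}L$ in $\nabla_{\bar{\partial}/\bar{\partial}y^a}\bar{\partial}/\bar{\partial}y^b$, which is exactly the obstruction to the leaves $L$ (and hence $L\oplus\xi$) being geodesically invariant in the sense required for a bundle-like metric. The only thing to be careful about is confirming that $\bar{\partial}/\bar{\partial}y^a$ genuinely lies in the transverse distribution in both cases, which is immediate from the block-diagonal form of~(\ref{met.mat.}).
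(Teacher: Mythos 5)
Your proposal is correct and follows essentially the same route as the paper: both isolate the $L$-component $-\frac{1}{F^2}g_{ab}L$ of $\nabla_{\bar{\partial}/\bar{\partial}y^a}\bar{\partial}/\bar{\partial}y^b$ from~(\ref{levicivita1}) and conclude that $G(\nabla_X Y+\nabla_Y X,L)=-2g_{ab}\neq 0$ obstructs the bundle-like condition for both $L$ and $L\oplus\xi$. Your added check that $\bar{\partial}/\bar{\partial}y^a$ actually lies in the transverse distribution in each case is a small but welcome piece of diligence the paper leaves implicit.
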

\begin{proof}
From~(\ref{levicivita1}), it is obtained that:
$$G(\nabla_{\frac{\bar{\partial}}{\bar{\partial}y^a}}\frac{\bar{\partial}}{\bar{\partial}y^b}
+\nabla_{\frac{\bar{\partial}}{\bar{\partial}y^b}}\frac{\bar{\partial}}{\bar{\partial}y^a},L)=-2g_{ab}$$
Therefore, $g_{ab}=0$ is a necessary condition to $G$ be bundle-like for $L$ or $L\oplus\xi$, which it is impossible.
\end{proof}

\section{Sasakian Structure and Indicatrix Bundle of a Finsler Manifold}
Now, let $(\bar{M},\bar{\varphi},\bar{\eta},\bar{\xi},\bar{g})$ be a contact metric manifold defined in~\cite{Blair}. In~\cite{bejancu2}, the new connection $\tilde{\nabla}$ was presented on the contact metric manifold $\bar{M}$ as follows:
$$\tilde{\nabla}_XY=\nabla_XY-\bar{\eta}(X)\nabla_Y\bar{\xi}-\bar{\eta}(Y)\nabla_X\bar{\xi}+\bar{g}(X,\bar{\varphi} Y)\bar{\xi}+\frac{1}{2}(\mathcal{L}_{\bar{\xi}}\bar{g})(X,Y)\bar{\xi}$$
where $\nabla$ is Levi-Civita connection of Riemannian metric $\bar{g}$. It was proved in~\cite{bejancu2} that the contact metric manifold $\bar{M}$ is a Sasakain manifold if and only if
\begin{equation}~\label{sasa1}
(\tilde{\nabla}_X\bar{\varphi})Y=0 \ \ \ \ \forall X,Y\in\Gamma(T\bar{M})
\end{equation}
Since the indicatrix bundle has the contact metric structure in Finslerian manifolds by Proposition 4.1 in~\cite{bejancu2008}, here it is tried to find an answer to the question that "\emph{Can the indicatrix bundle with contact structure given in~(\ref{contact1}) be a Sasakian manifold?}". First, the following Lemma is proved in order to reduce the number of calculations. To answer this question, the followings are needed:
\begin{lemma}\label{redu}
If $(\bar{M},\bar{\varphi},\bar{\eta},\bar{\xi},\bar{g})$ be a contact metric manifold with contact distribution $\bar{D}$, then $\bar{M}$ is Sasakian manifold if and only if:
$$(\tilde{\nabla}_X\bar{\varphi})Y=0 \ \ \ \ \forall X,Y\in\Gamma(\bar{D})$$
\end{lemma}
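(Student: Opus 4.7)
The forward implication is immediate: if $(\tilde{\nabla}_X\bar{\varphi})Y=0$ for all $X,Y\in\Gamma(T\bar{M})$, then \emph{a fortiori} it holds for $X,Y\in\Gamma(\bar{D})$. For the converse, using the pointwise decomposition $T\bar{M}=\bar{D}\oplus\langle\bar{\xi}\rangle$ and the $C^\infty$-tensoriality of $(\tilde{\nabla}_X\bar{\varphi})Y$ in both arguments, the plan is to reduce the verification to three boundary cases involving $\bar{\xi}$: (i) $(\tilde{\nabla}_X\bar{\varphi})\bar{\xi}=0$ for $X\in\Gamma(\bar{D})$; (ii) $(\tilde{\nabla}_{\bar{\xi}}\bar{\varphi})Y=0$ for $Y\in\Gamma(\bar{D})$; and (iii) $(\tilde{\nabla}_{\bar{\xi}}\bar{\varphi})\bar{\xi}=0$.

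The first thing I would establish is the universal identity $\tilde{\nabla}_X\bar{\xi}=0$ for every $X$, valid in any contact metric manifold. Expanding the definition of $\tilde{\nabla}$, the terms $-\bar{\eta}(\bar{\xi})\nabla_X\bar{\xi}$ and $\nabla_X\bar{\xi}$ cancel; the term $-\bar{\eta}(X)\nabla_{\bar{\xi}}\bar{\xi}$ vanishes since $\nabla_{\bar{\xi}}\bar{\xi}=0$; the term $\bar{g}(X,\bar{\varphi}\bar{\xi})\bar{\xi}$ vanishes since $\bar{\varphi}\bar{\xi}=0$; and the final contribution is zero because $(\mathcal{L}_{\bar{\xi}}\bar{g})(X,\bar{\xi})=\bar{g}(\nabla_X\bar{\xi},\bar{\xi})+\bar{g}(X,\nabla_{\bar{\xi}}\bar{\xi})=0$ (using $|\bar{\xi}|^2=1$). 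Combined with $\bar{\varphi}\bar{\xi}=0$, this yields $(\tilde{\nabla}_X\bar{\varphi})\bar{\xi}=\tilde{\nabla}_X(\bar{\varphi}\bar{\xi})-\bar{\varphi}(\tilde{\nabla}_X\bar{\xi})=0$ for every $X$, dispatching cases (i) and (iii) at once, independently of the hypothesis.

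The main obstacle is case (ii). An analogous expansion of $\tilde{\nabla}_{\bar{\xi}}Y$ and $\tilde{\nabla}_{\bar{\xi}}(\bar{\varphi}Y)$ for $Y\in\Gamma(\bar{D})$, using $\bar{\eta}(Y)=\bar{\eta}(\bar{\varphi}Y)=0$, $\bar{g}(\bar{\xi},\bar{\varphi}Y)=0$, $(\mathcal{L}_{\bar{\xi}}\bar{g})(\bar{\xi},\,\cdot\,)=0$, and the torsion-freeness of $\nabla$, collapses these to $[\bar{\xi},Y]$ and $[\bar{\xi},\bar{\varphi}Y]$ respectively, whence $(\tilde{\nabla}_{\bar{\xi}}\bar{\varphi})Y=(\mathcal{L}_{\bar{\xi}}\bar{\varphi})Y$. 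Thus (ii) is equivalent to the $K$-contact identity $\mathcal{L}_{\bar{\xi}}\bar{\varphi}|_{\bar{D}}=0$, and the argument is completed by showing this follows from the hypothesis. Concretely, I would rewrite the hypothesis for $X,Y\in\bar{D}$ in the form
$$(\nabla_X\bar{\varphi})Y=\bar{g}(X,Y)\bar{\xi}-\tfrac{1}{2}(\mathcal{L}_{\bar{\xi}}\bar{g})(X,\bar{\varphi}Y)\bar{\xi},$$
invoke the standard contact-metric relations $\nabla_X\bar{\xi}=-\bar{\varphi}X-\bar{\varphi}hX$ and $h\bar{\varphi}+\bar{\varphi}h=0$ with $h:=\tfrac{1}{2}\mathcal{L}_{\bar{\xi}}\bar{\varphi}$ to reduce $(\mathcal{L}_{\bar{\xi}}\bar{g})(X,\bar{\varphi}Y)$ to a bilinear expression in $h$, and then combine with the algebraic identity $(\nabla_X\bar{\varphi})(\bar{\varphi}Y)+\bar{\varphi}((\nabla_X\bar{\varphi})Y)=(\nabla_X\bar{\eta})(Y)\bar{\xi}$ (for $Y\in\bar{D}$) to extract $h\equiv 0$. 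Once (ii) is secured, cases (i)--(iii) together with the hypothesis give $(\tilde{\nabla}_X\bar{\varphi})Y=0$ on all of $\Gamma(T\bar{M})$, and the result of~\cite{bejancu2} recalled in~(\ref{sasa1}) yields that $\bar{M}$ is Sasakian.
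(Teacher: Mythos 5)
Your reduction strategy is the same as the paper's (decompose $T\bar{M}=\bar{D}\oplus\langle\bar{\xi}\rangle$ and use tensoriality), and your treatment of cases (i) and (iii) via the identity $\tilde{\nabla}_X\bar{\xi}=0$ is correct and in fact more explicit than the paper, which simply drops those terms. You have also correctly located the real difficulty, which the paper's proof passes over in silence: the term $f\bigl((\tilde{\nabla}_{\bar{\xi}}\bar{\varphi})Y\bigr)$, which equals $f(\mathcal{L}_{\bar{\xi}}\bar{\varphi})Y=2fhY$ and vanishes precisely when the structure is $K$-contact. The problem is that your proposed mechanism for extracting $h\equiv 0$ from the hypothesis does not work. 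For $X,Y\in\Gamma(\bar{D})$ the hypothesis is, as you say, $(\nabla_X\bar{\varphi})Y=\bigl(\bar{g}(X,Y)+\bar{g}(hX,Y)\bigr)\bar{\xi}$ (using $\tfrac12(\mathcal{L}_{\bar{\xi}}\bar{g})(X,\bar{\varphi}Y)=-\bar{g}(hX,Y)$). Feeding this into $(\nabla_X\bar{\varphi})(\bar{\varphi}Y)+\bar{\varphi}\bigl((\nabla_X\bar{\varphi})Y\bigr)=(\nabla_X\bar{\eta})(Y)\bar{\xi}$ gives, on the left, $\bigl(\bar{g}(X,\bar{\varphi}Y)+\bar{g}(hX,\bar{\varphi}Y)\bigr)\bar{\xi}$ (the second summand dies because $(\nabla_X\bar{\varphi})Y$ is proportional to $\bar{\xi}$), and on the right, $\bar{g}(Y,-\bar{\varphi}X-\bar{\varphi}hX)\bar{\xi}$, which is the same expression. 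The identity is satisfied identically and yields no constraint on $h$.

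Worse, the gap cannot be closed, because the hypothesis genuinely does not force $h=0$. Taking the $\bar{\xi}$-component of $(\nabla_X\bar{\varphi})Y$ shows that $\bar{g}((\nabla_X\bar{\varphi})Y,\bar{\xi})=\bar{g}(X,Y)+\bar{g}(hX,Y)$ holds on every contact metric manifold, so the hypothesis for $X,Y\in\Gamma(\bar{D})$ is exactly the vanishing of the $\bar{D}$-component of $(\nabla_X\bar{\varphi})Y$. In dimension $3$ this is automatic: writing $he=\lambda e$, $h\bar{\varphi}e=-\lambda\bar{\varphi}e$ for a unit eigenframe $\{e,\bar{\varphi}e\}$ of $\bar{D}$ and using $\nabla_X\bar{\xi}=-\bar{\varphi}X-\bar{\varphi}hX$, one computes $(\nabla_e\bar{\varphi})e=(1+\lambda)\bar{\xi}$, $(\nabla_{\bar{\varphi}e}\bar{\varphi})(\bar{\varphi}e)=(1-\lambda)\bar{\xi}$ and $(\nabla_e\bar{\varphi})(\bar{\varphi}e)=(\nabla_{\bar{\varphi}e}\bar{\varphi})e=0$, so $(\tilde{\nabla}_X\bar{\varphi})Y=0$ for all $X,Y\in\Gamma(\bar{D})$ on \emph{every} $3$-dimensional contact metric manifold, including non-Sasakian ones such as the flat structure with $\lambda=1$. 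So the "if" direction of the lemma as stated fails, and any proof of it must break somewhere; yours breaks at step (ii), and the paper's breaks at the same place, where the term $f(\tilde{\nabla}_{\bar{\xi}}\bar{\varphi})Y$ is discarded without justification. A correct reduction must keep $X=\bar{\xi}$ in the hypothesis (i.e., require $(\tilde{\nabla}_X\bar{\varphi})Y=0$ for all $X\in\Gamma(T\bar{M})$ and $Y\in\Gamma(\bar{D})$), which is precisely what supplies $h=0$.
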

\begin{proof}
For all $\bar{X}\in\Gamma(T\bar{M})$, they can be written in the form $X+f\bar{\xi}$ where $X\in\Gamma\bar{D}$ and $f\in C^{\infty}(\bar{M})$. Therefore:
$$(\tilde{\nabla}_{\bar{X}}\bar{\varphi})\bar{Y}=(\tilde{\nabla}_{X+f\bar{\xi}}\bar{\varphi})(Y+h\bar{\xi})=
(\tilde{\nabla}_X\bar{\varphi})Y+(\tilde{\nabla}_{f\bar{\xi}}\bar{\varphi})Y+(\tilde{\nabla}_X\bar{\varphi})h\bar{\xi}$$
$$+(\tilde{\nabla}_{f\bar{\xi}}\bar{\varphi})h\bar{\xi}=(\tilde{\nabla}_X\bar{\varphi})Y
+f(\tilde{\nabla}_{\bar{\xi}}\bar{\varphi}Y-\bar{\varphi}\tilde{\nabla}_{\bar{\xi}}Y)+\tilde{\nabla}_X\bar{\varphi}
(h\bar{\xi})-\bar{\varphi}(\tilde{\nabla}_Xh\bar{\xi})$$
$$+f(\tilde{\nabla}_{\bar{\xi}}\bar{\varphi}{h\bar{\xi}}-\bar{\varphi}\tilde{\nabla}_{\bar{\xi}}h\bar{\xi})
=(\tilde{\nabla}_X\bar{\varphi})Y$$
\noindent The lemma is proved using Theorem 3.2 in~\cite{bejancu2} and the last equation.
\end{proof}
Now, the following Theorem can be expressed:
\begin{Theorem}~\label{sasaki}
Let $(M,F)$ be a Finsler manifold. Then, indicatrix bundle $I\!M$ with its natural contact structure given in~(\ref{contact1}) can never be a Sasakian manifold.
\end{Theorem}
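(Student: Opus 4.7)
The plan is to invoke Lemma~\ref{redu}, so it suffices to produce $X,Y\in\Gamma(D)$ for which $(\tilde\nabla_X\varphi)Y$ is nonzero at some point of $IM$. I will take $X=Y=\frac{\bar{\partial}}{\bar{\partial}y^a}$ for a fixed index $a\in\{1,\dots,n-1\}$ and track only the $\xi$-component of the resulting vector field; this will turn out to be a negative multiple of $g_{aa}$, which cannot vanish because $g$ is positive definite.

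Both $X$ and $\varphi Y=\frac{\bar{\delta}}{\bar{\delta}x^a}$ lie in $D$, so $\eta(X)=\eta(\varphi Y)=0$ and the defining formula for $\tilde\nabla$ collapses on $D\times D$ to
$$\tilde\nabla_U V=\bar\nabla_U V+\bar G(U,\varphi V)\,\xi+\tfrac12(\mathcal L_\xi\bar G)(U,V)\,\xi.$$
Since $\varphi\xi=0$, the $\xi$-component of $\varphi(\tilde\nabla_X Y)$ automatically vanishes, so the $\xi$-component of $(\tilde\nabla_X\varphi)Y$ reduces to that of $\tilde\nabla_X(\varphi Y)=\tilde\nabla_{\frac{\bar{\partial}}{\bar{\partial}y^a}}\frac{\bar{\delta}}{\bar{\delta}x^a}$. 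Three pieces must therefore be computed: the $\xi$-coefficient of $\bar\nabla_{\frac{\bar{\partial}}{\bar{\partial}y^a}}\frac{\bar{\delta}}{\bar{\delta}x^a}$, the scalar $\bar G(\frac{\bar{\partial}}{\bar{\partial}y^a},\varphi^{2}\frac{\bar{\partial}}{\bar{\partial}y^a})$, and $\tfrac12(\mathcal L_\xi\bar G)(\frac{\bar{\partial}}{\bar{\partial}y^a},\frac{\bar{\delta}}{\bar{\delta}x^a})$.

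For the first, the relevant entry in~(\ref{levicivita1}) has no $L$-component, so $\bar\nabla$ coincides with $\nabla$ by the Gauss formula~(\ref{guass}), and its $\xi$-coefficient is $F^{-2}(\tfrac12 R_{aa}-g_{aa})=-g_{aa}$ on $IM$, using that $R_{ij}$ inherits antisymmetry in $i,j$ from $R_{\,ij}^{\,k}$ and hence $R_{aa}=0$. For the second, on $D$ one has $\varphi^{2}=-\mathrm{id}$, giving $-g_{aa}$. For the third, the standard identity $(\mathcal L_\xi\bar G)(U,V)=\bar G(\bar\nabla_U\xi,V)+\bar G(U,\bar\nabla_V\xi)$ together with the two formulas of~(\ref{levicivita1}) for $\bar\nabla_{\frac{\bar{\partial}}{\bar{\partial}y^a}}\xi$ and $\bar\nabla_{\frac{\bar{\delta}}{\bar{\delta}x^a}}\xi$ produces, after the cancellations forced by $R_{aa}=0$ and the $\bar G$-orthogonality of $\frac{\bar{\delta}}{\bar{\delta}x^e}$ to $\frac{\bar{\partial}}{\bar{\partial}y^a}$, the value $g_{aa}$; the third contribution is therefore $\tfrac12 g_{aa}$. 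Summing, the $\xi$-component of $(\tilde\nabla_X\varphi)Y$ equals $-g_{aa}-g_{aa}+\tfrac12 g_{aa}=-\tfrac32 g_{aa}\neq 0$, and Lemma~\ref{redu} concludes the proof.

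The one place a mistake could creep in is the tangency check: each covariant derivative appearing in the argument must actually be tangent to $IM$, so that the Gauss formula~(\ref{guass}) contributes nothing extra. A glance at~(\ref{levicivita1}) shows that the only entry there carrying a nonzero $L$-component is $\nabla_{\frac{\bar{\partial}}{\bar{\partial}y^a}}\frac{\bar{\partial}}{\bar{\partial}y^b}$, which does not enter the computation above. The remaining work is straightforward bookkeeping against the definitions in~(\ref{symb.}), so the whole theorem reduces to a short diagonal calculation inside a single scalar component of $\tilde\nabla\varphi$.
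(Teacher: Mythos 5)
Your proposal is correct and follows essentially the same route as the paper: reduce to sections of $D$ via Lemma~\ref{redu} and then exhibit, using the connection components~(\ref{levicivita1}), a component of $\tilde\nabla\varphi$ that is a nonzero multiple of $g_{ab}$. In fact your version is more explicit than the paper's, which merely asserts that ``one of the components \dots is $g_{ab}$'' without identifying it; your diagonal computation of the $\xi$-component, giving $-\tfrac32 g_{aa}$, supplies that missing detail and is robust against sign slips, since no choice of signs on the three contributions $\pm g_{aa},\ \pm g_{aa},\ \pm\tfrac12 g_{aa}$ can sum to zero.
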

\begin{proof}
From lemma~\ref{redu} and local frame~(\ref{new bas}), $I\!M$ is a Sasakian manifold if and only if:
$$(\tilde{\nabla}_{\frac{\bar{\delta}}{\bar{\delta}x^a}}\varphi)\frac{\bar{\delta}}{\bar{\delta}x^b}= (\tilde{\nabla}_{\frac{\bar{\delta}}{\bar{\delta}x^a}}\varphi)\frac{\bar{\partial}}{\bar{\partial}y^b}= (\tilde{\nabla}_{\frac{\bar{\partial}}{\bar{\partial}y^a}}\varphi)\frac{\bar{\delta}}{\bar{\delta}x^b}= (\tilde{\nabla}_{\frac{\bar{\partial}}{\bar{\partial}y^e}}\varphi)\frac{\bar{\partial}}{\bar{\partial}y^b}=0$$
\noindent Using~(\ref{levicivita1}), one of the components in above equations where it must be zero is $g_{ab}$
which it is impossible and shows that the indicatrix bundle cannot have a Sasakian structure on contact structure given in~(\ref{contact1}).
\end{proof}

\noindent\textbf{Another proof for Theorem~\ref{sasaki}}\\
The following argument was presented in Chapter 6 of~\cite{Blair}. We consider $TM=I\!M\times\mathbb{R}$ for the Finslerian manifold $(M,F)$ and introduce the almost complex structure $\bar{J}$ by means of $\varphi$ defined in~(\ref{contact1}) as follows:
$$\bar{J}(X+fL)=\varphi(X)-f\xi+\eta(X)L$$
where $X$ is a vector field tangent to indicatrix bundle and $\xi,\eta$ were defined in Section 2. Using a straight calculation, it can be seen that $\bar{J}$ is equal to $J$ defined in~(\ref{a.c.s.}). The contact structure $(\varphi,\eta,\xi)$ will be a Sasakian structure if and only if $(\varphi,\eta,\xi)$ is normal, that is, $\bar{J}$ (or $J$) is integrable. Integrability of $\bar{J}$ (or $J$) is equivalent to vanishing the following equations
$$N_J(\frac{\delta}{\delta x^i},\frac{\delta}{\delta x^j})=-N_J(\frac{\partial}{\partial y^i},\frac{\partial}{\partial y^j})=-R_{ij^k}\frac{\partial}{\partial y^k}$$
$$N_J(\frac{\delta}{\delta x^i},\frac{\partial}{\partial y^j})=-R_{ij}^k\frac{\delta}{\delta x^k}$$
Therefore, $\bar{J}$ (or $J$) is integrable if and only if $M$ is a flat manifold. Up to now, it can be shown that $I\!M$ is Sasakian if and only if $M$ is flat. Furthermore, it was proved that $\xi$ is a killing vector field if $I\!M$ be a Sasakian manifold~\cite{Blair}. Therefore, $M$ has constant curvature 1 using  Theorem 3.4 in~\cite{bejancu} and it is a contradiction to the previous result which shows that $M$ is flat if $I\!M$ is a Sasakian manifold. $\Box$


\vspace{.5cm}

\noindent Hassan Attarchi;\\
e-mail: hassan.attarchi@aut.ac.ir\\
Ph.D. Student in\\
Department of Mathematics and Computer Science,\\
Amirkabir University of Technology, Tehran, Iran.
\vspace{.3cm}

\noindent Corresponding author: Morteza Mir Mohammad Rezaii;\\
e-mail: mmreza@aut.ac.ir\\
Associated Professor in\\
Department of Mathematics and Computer Science,\\
Amirkabir University of Technology, Tehran, Iran.

\end{document}